\newtheorem{theorem}{Theorem}[section]
\newtheorem{lemma}[theorem]{Lemma}
\newtheorem{proposition}[theorem]{Proposition}
\newtheorem{definition}[theorem]{Definition}
\newtheorem{example}[theorem]{Example}
\newcommand{\R}{\mbox{$\Bbb R$}} 
\title{ \Large\bfseries A novel procedure for constructing invariant subspaces of a set of matrices}
\author[1]{Ahmad Y. Al-Dweik}
\author[2]{Ryad Ghanam}
\author[3]{Gerard Thompson}
\author[4]{Hassan Azad}
\affil[1]{Department of Mathematics, Statistics and Physics, Qatar University, Doha, 2713, State of Qatar; aydweik@qu.edu.qa}
\affil[2]{Department of Liberal Arts $\&$ Sciences, Virginia Commonwealth University in Qatar, Doha 8095, Qatar; raghanam@vcu.edu}
\affil[3]{Department of Mathematics, University of Toledo, Toledo, OH 43606, USA; gerard.thompson@utoledo.edu}
\affil[4]{Abdus Salam School of Mathematical Sciences, GC University, Lahore 54600, Pakistan; hassan.azad@sms.edu.pk}
\begin{document}
\maketitle
\begin{abstract}
A problem that is frequently encountered in a variety of mathematical contexts, is to find the common invariant subspaces of a single, or set of
matrices. A new method is proposed that gives a definitive answer to this problem. The key idea consists of finding common eigenvectors for
exterior powers of the matrices concerned. A convenient formulation of the Pl\"ucker relations is then used to ensure that these eigenvectors actually correspond to
subspaces or provide the initial constraints for eigenvectors involving parameters. A procedure for computing the divisors of  totally decomposable vector is also provided. Several examples are given for which the calculations are too tedious to do by hand and are performed by coding the conditions
found into Maple.
\end{abstract}
\bigskip

\noindent AMS classification: 14M15, 15A75, 47A15, 68-04

\bigskip

\noindent Keywords: Invariant subspace, totally decomposable multivector, Grassmann manifold, Pl\"ucker relations.
\section{Introduction}

A problem that occurs frequently in a variety of mathematical contexts, is to find the common invariant subspaces of a single, or set of
matrices. Of course the problem is even more challenging in infinite dimensions and the invariant subspace problem
for bounded operators on a separable Hilbert space remains open; see for example, \cite{E,Y}. In this article we shall be
concerned with finite dimensions only. Although it is usually easy to find lots of common invariant subspaces for one or more matrices, there
remains the problem of ensuring that one has obtained \emph{all} possible such subspaces. In this article we propose a novel method that will
overcome such difficulties.

In \cite{T}, Tsatsomeros provided a necessary and sufficient conditions for the existence of a common non-trivial invariant subspace of a set of matrices. The author also provided a six step method for finding such subspaces for two matrices.
He states that step 4 of the method can be performed using Algorithm 12.4.3 in \cite{GVL} and that steps 5 and 6 can prove to be theoretically and practically challenging. 

In this work, we extend his method to a finit set of matrices. To avoid the difficulty in step 4, we provide a different algorithm for computing bases for the intersections of eigenspaces of a set of matrices. 

To overcome the challenges in steps 5  and 6, of checking if the intersecting eigenspaces of two matrices contain a non-zero decomposable vector and find its divisors, we provide a convenient formulation of the Pl\"ucker relations that can be used to check for the decomposability of a multivector $\Lambda \in \bigwedge^d V$. If the  multivector $\Lambda$ involves parameters, then the quadratic Pl\"ucker relations for decomposability provide initial constraints  on the these parameters so as to keep $\Lambda$ totally decomposable in  $\bigwedge^d V$.
The Plücker relations are given in a simple form which is easily programmable using the symbolic
manipulation program Maple. Moreover, we provide a procedure for computing the divisors of  totally decomposable vector
$\Lambda \in \bigwedge^d V$.

An outline of the paper is as follows. In Section 2 we give some details about invariant subspaces, in particular, explaining the connection between
an invariant subspace and a totally decomposable multivector. In Section 3 we provide a low-dimensional example, where the details can be carried out by hand.
In Section 4 we present the main result of the paper, which gives a convenient formulation of the Pl\"ucker relations that can be used to
check for the decomposability of a multivector or provide the initial constraints for a multivector involving parameters. A procedure for computing the divisors of a totally decomposable vector $\Lambda
\in \bigwedge^d V$ is also provided. In Section 5 we outline two algorithms to find common invariant subspaces, first for one-dimensional
subspaces then for higher dimensions. Finally, in Section 6 we provide three examples for which the calculations are too complicated to do by hand
and are performed using Maple although we do not go into details of the code here.
\section{Invariant Subspaces}
Let $\{v_1, v_2, ..., v_n\}$ be a basis of an $n$-dimensional vector space $V$.
We shall denote the associated dual basis for the dual space $V^*$
by $\omega_1,\omega_2,...,\omega_n$, so that $\omega_i(v^j)=\delta^j_i$ or, equivalently, we shall
write $\langle v^j, \omega_i \rangle=\delta^j_i$.
For the moment we shall assume either that the underlying field of $V$ is either
$\R$ or else that all eigenvalues of the matrices encountered are real. We shall address the issues of complex eigenvalues
in a subsequent article. Now suppose that $T_A:V\rightarrow V$ ia an endomorphism of $V$.
Since we have chosen a basis for $V$, we shall identify $T_A$ with its $n\times n$ matrix, denoted by $A$. Thus we have
\begin{equation}
T_A(v_i)=\sum\limits_{j=1}^n a_i^jv_j.
\end{equation}

\begin{definition}\rm \label{d2}
A subspace $W\subset V$
is said to be \emph{invariant} with respect to the transformation $T_A$ if
$T_AW\subset W$.
\end{definition}
An extensive study of invariant subspaces of transformations may be found in \cite{GLR}. See also \cite{BF}.
Let $T_{A_{\alpha}}$ be a family of linear transformations indexed by $\alpha$ and suppose that
$W_1, W_2$ are  $T_{A_{\alpha}}$-invariant. Then
\begin{equation}
T_{A_{\alpha}}(W_1\cap W_2)\subset T_{A_{\alpha}}W_1\cap T_{A_{\alpha}}W_2\subset W_1\cap W_2
\end{equation}
and
\begin{equation}
T_{A_{\alpha}}(W_1+W_2)=T_{A_{\alpha}}W_1+T_{A_{\alpha}}W_2\subset W_1 + W_2.
\end{equation}
The meet of two such subspaces $W_1, W_2$ is $W_1\cap W_2$ and their join is the space $W_1+W_2$, that is, the space
spanned by $W_1$ and $W_2$. Hence the set of invariant subspaces for a family of transformations form a lattice.

It is clear that if $T_A$ is a multiple of the identity transformation on $V$, then \emph{every}
subspace of $V$ is invariant. More generally, if a family of transformations possesses a subspace $W$ of
$V$ on which each of them is a multiple of the identity, then again any subspace of $W$ is invariant.
Accordingly, we shall sometimes assume in the sequel that no such invariant subspace exists.

Our goal is to find, if possible, the lattice of invariant subspaces of a single transformation
and eventually for a family of transformations. We shall specify these subspaces by giving
decomposable multivectors $\Lambda$ in terms of the reference basis $\{v_1, v_2, ..., v_n\}$ used for $V$.
Such a $\Lambda$ of degree $d$ in $\bigwedge^d V$, the $d^{th}$ exterior power of
$V$. A basis for $\bigwedge^d V$ consists of $\{v_{i_1}\wedge v_{i_2}\wedge...\wedge
v_{i_d}\mid 1\leq i_1 < i_2 <...<i_d \leq n\}$ where $1\leq d \leq n$.

However, with reference to a single transformation $T_A$, we shall frequently assume that the
basis $\{v_1, v_2, ..., v_n\}$ is \emph{adapted} to a $d$-dimensional invariant subspace $W$, by which we mean
that $\{v_1,v_2, ..., v_d\}$ is a basis for $W$. Then $T_A$ induces
an eigenvector in $\bigwedge^d V$, the $d$th exterior power of $V$. Indeed, in the adapted basis
\begin{equation}\label{wed}
\bigwedge^d T_A(v_1\wedge v_2\wedge ...\wedge v_d)=\det(\tilde{A})(v_1\wedge v_2\wedge ...\wedge v_d).
\end{equation}
In eq.(\ref{wed}), $\tilde{A}$ denotes the submatrix of $A$ that results when $T_A$ is restricted to $W$.
At this point it is convenient to introduce the following definition.
\begin{definition}\rm \label{d3}
The vector $\Lambda \in \bigwedge^d V$ is said to be \emph{totally
decomposable} if there are $d$ linearly independent vectors $v_1, v_2,
...,v_d \in V$ such that $\Lambda =v_1 \wedge v_2 \wedge ...\wedge v_d$.
\end{definition}
\noindent Again, it may be assumed in Definition (\ref{d3}) that we are working with a basis of $V$
that is adapted to $W$. If we drop the independence condition in Definition (\ref{d3}), then we would say simply that
$\Lambda$ is decomposable. Thus, an invariant subspace engenders a totally
decomposable multivector. Conversely, suppose that $\Lambda =v_1\wedge v_2\wedge ...\wedge v_d$,
is a a totally decomposable eigenvector of $\bigwedge^d A$; then
\begin{equation}
\bigwedge^d A(v_1\wedge v_2\wedge...\wedge v_d)=\lambda  (v_1\wedge v_2\wedge ...\wedge v_d)
\end{equation}
for some $\lambda$. Since $\bigwedge^d A(v_1\wedge v_2\wedge ...\wedge v_d)= Av_1\wedge  Av_2\wedge ...\wedge Av_d$, then we have $Av_1\wedge  Av_2\wedge ...\wedge Av_d=\lambda  (v_1\wedge v_2\wedge ...\wedge v_d)$.
If we assume that $\lambda\ne 0$, then  $Av_m\wedge (v_1\wedge v_2\wedge ...\wedge v_d)=0$ for each $1 \leq m \leq d$
and hence the subspace spanned by $v_1, v_2, ...,v_d$ is $A$-invariant or $T_A$-invariant.
Thus, we have the key observation that underlies this paper.
\begin{proposition}
Assuming that the matrix $A$ is non-singular, there is a one-one correspondence between invariant subspaces of $A$ and projective equivalence classes of totally decomposable eigenvectors of $\bigwedge^d A$
where $1\leq d \leq n$.
\end{proposition}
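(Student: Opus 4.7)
The plan is to construct inverse maps in each direction between $d$-dimensional invariant subspaces and projective classes of totally decomposable eigenvectors of $\bigwedge^d A$.

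Forward direction. Given a $d$-dimensional $A$-invariant subspace $W$, I would pick any basis $\{w_1,\ldots,w_d\}$ of $W$ and set $\Lambda_W:=w_1\wedge\cdots\wedge w_d$. Choosing a basis of $V$ adapted to $W$, equation (\ref{wed}) gives $\bigwedge^d A(\Lambda_W)=\det(\tilde A)\,\Lambda_W$, exhibiting $\Lambda_W$ as a decomposable eigenvector; because $A$ is non-singular and $W$ is $A$-stable and finite-dimensional, $\tilde A$ is invertible, so the eigenvalue is non-zero. A change of basis of $W$ multiplies $\Lambda_W$ by the non-zero determinant of the transition matrix, so only the projective class $[\Lambda_W]$ is intrinsic to $W$.

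Reverse direction. Given a totally decomposable eigenvector $\Lambda=v_1\wedge\cdots\wedge v_d$ of $\bigwedge^d A$, set $W_\Lambda:=\operatorname{span}(v_1,\ldots,v_d)$. The key point is that $W_\Lambda$ can be recovered intrinsically from $\Lambda$ as
\begin{equation}
W_\Lambda=\{v\in V : v\wedge\Lambda=0\},
\end{equation}
a standard exterior-algebra identity valid whenever $\Lambda$ is a non-zero decomposable $d$-vector, and one that manifestly depends only on $[\Lambda]$. To see that $W_\Lambda$ is $A$-invariant, I would invoke non-singularity of $A$ to conclude that $\bigwedge^d A$ is invertible, so the eigenvalue $\lambda$ in $Av_1\wedge\cdots\wedge Av_d=\lambda\Lambda$ is non-zero; then, for each $m$,
\begin{equation}
Av_m\wedge\Lambda=\lambda^{-1}\,Av_m\wedge Av_1\wedge\cdots\wedge Av_d=0,
\end{equation}
which by the intrinsic characterization places $Av_m\in W_\Lambda$. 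This is essentially the argument already sketched in the paragraph preceding the statement.

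The two maps are mutually inverse: $W_{\Lambda_W}=W$ by construction, and $[\Lambda_{W_\Lambda}]=[\Lambda]$ because any basis of $W_\Lambda$ wedges to a non-zero scalar multiple of $\Lambda$. The main obstacle is the intrinsic identification $W_\Lambda=\{v:v\wedge\Lambda=0\}$, on which the well-definedness of the reverse map rests; this is the only place where the full force of \emph{total} decomposability (linear independence of the $v_i$) is used, and without it the subspace attached to $\Lambda$ would not be canonical. The non-singularity hypothesis is likewise essential in the reverse direction: if $\lambda=0$ were allowed, then $Av_1\wedge\cdots\wedge Av_d=0$ would only force linear dependence of the $Av_m$, not membership in $W_\Lambda$, and the correspondence would break.
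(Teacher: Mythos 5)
Your proposal is correct and follows essentially the same route as the paper, which proves the proposition via the discussion immediately preceding it: the adapted-basis computation in equation (\ref{wed}) for the forward direction, and the observation that $\lambda\neq 0$ forces $Av_m\wedge(v_1\wedge\cdots\wedge v_d)=0$ for the reverse direction. You merely make explicit the well-definedness on projective classes and the mutual-inverse check, which the paper leaves implicit.
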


It is necessary to assume that the matrix $A$ is non-singular. For example, if $A$ is nilpotent then some exterior power $\bigwedge^k A=0$
and we may not obtain information about invariant subspaces of $A$. On the other hand, we can always add a suitable multiple of the identity
to $A$ so as to obtain $\overline{A}$, which is non-singular: $A$ and $\overline{A}$ have the same invariant subspaces. More generally:
\begin{lemma}
Given a set of $n\times n$ matrices $\{A_1,A_2,...,A_k\}$, it is possible to add a suitable multiple of the identity
to each $A_i$ so as to obtain $\overline{A_i}=A_i+\mu_iI$, each of which is non-singular and then the sets
$\{A_1,A_2,...,A_k\}$ and $\{\overline{A}_1,\overline{A}_2,...,\overline{A}_k\}$ have the same invariant subspaces.
\end{lemma}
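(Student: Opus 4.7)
The plan is to handle the two assertions separately: first exhibit choices of $\mu_i$ making each $\overline{A_i}$ non-singular, then verify that the invariant subspace lattices coincide.

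For the first assertion, I would argue that each $A_i$ has at most $n$ eigenvalues, so the set $S_i = \{-\lambda : \lambda \text{ is an eigenvalue of } A_i\}$ is finite. Since the underlying field (assumed to be $\R$ in the paper) is infinite, I can pick $\mu_i \notin S_i$; then $-\mu_i$ is not an eigenvalue of $A_i$, equivalently $\det(A_i + \mu_i I) \neq 0$, so $\overline{A_i} = A_i + \mu_i I$ is non-singular. If one wants a uniform choice, any $\mu_i$ strictly exceeding the spectral radius of $A_i$ works.

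For the second assertion, I would show the two inclusions of invariant subspace lattices directly. Let $W \subseteq V$ be any subspace. Note that the identity transformation $I$ leaves every subspace invariant, so $\mu_i I \cdot W \subseteq W$ trivially. Hence, if $A_i W \subseteq W$ for every $i$, then
\begin{equation}
\overline{A_i} W = A_i W + \mu_i I \cdot W \subseteq W,
\end{equation}
so $W$ is invariant for the family $\{\overline{A_i}\}$. Conversely, if $\overline{A_i} W \subseteq W$ for every $i$, then
\begin{equation}
A_i W = (\overline{A_i} - \mu_i I) W \subseteq \overline{A_i} W + \mu_i I \cdot W \subseteq W,
\end{equation}
so $W$ is invariant for the original family $\{A_i\}$. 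Thus the two lattices of common invariant subspaces agree.

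There is no real obstacle here: the statement is essentially a bookkeeping observation that adding a scalar multiple of the identity neither alters invariant subspaces nor requires any compatibility between the shifts $\mu_i$ for different indices $i$. The only subtlety to flag is the implicit assumption that the base field is large enough to avoid the finite set of forbidden values, which is guaranteed in the paper's setting.
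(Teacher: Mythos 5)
Your proof is correct and follows essentially the same approach as the paper's: choose each $\mu_i$ to avoid the finitely many eigenvalues of $A_i$, and observe that adding a multiple of the identity does not change invariant subspaces. You simply supply the two-way inclusion argument that the paper leaves implicit.
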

\begin{proof}
Since we are only considering a finite number of matrices, there are only a finite number of eigenvalues altogether for the
matrices $A_1,A_2,...,A_k$. We simply choose each $\mu_i$ so that each $A_i+\mu_iI$ is non-singular. Adding
multiples of the identity does not change the common invariant subspaces of $A_1,A_2,...,A_k$.
\end{proof}

\section{Illustrative Example}

In this Section we study an example where the calculations are simple enough to do by hand.
We consider the problem of finding the common invariant subspaces of the following three matrices:

\begin{equation}\label{41}
A_1= \left( \begin {array}{cccc}
0&0&0&1\\ \noalign{\medskip}
0&0&0&0\\ \noalign{\medskip}
0&0&0&0\\ \noalign{\medskip}
 \noalign{\medskip}
0&0&0&0\end {array} \right),
A_2= \left( \begin {array}{cccc}
0&0&0&0\\ \noalign{\medskip}
0&0&0&1\\ \noalign{\medskip}
0&0&0&0\\ \noalign{\medskip}
 \noalign{\medskip}
0&0&0&0\end {array} \right),
A_3= \left( \begin {array}{cccc}
0&1&0&0\\ \noalign{\medskip}
0&0&1&0\\ \noalign{\medskip}
0&0&0&0\\ \noalign{\medskip}
 \noalign{\medskip}
0&0&0&0\end {array} \right).
\end{equation}
In the first place we note that $\bigwedge^2 A_1=\bigwedge^2 A_2=0$. However, the common invariant subspaces of $A_1,A_2,A_3$
are not just determined by $A_3$ only. Accordingly, we add the identity matrix to each of $A_1, A_2, A_3$ so as to obtain $\overline{A}_1, \overline{A}_2, \overline{A}_3
$ giving
\begin{equation}\label{410}
\overline{A}_1= \left( \begin {array}{cccc}
1&0&0&1\\ \noalign{\medskip}
0&1&0&0\\ \noalign{\medskip}
0&0&1&0\\ \noalign{\medskip}
 \noalign{\medskip}
0&0&0&1\end {array} \right),
\overline{A}_2= \left( \begin {array}{cccc}
1&0&0&0\\ \noalign{\medskip}
0&1&0&1\\ \noalign{\medskip}
0&0&1&0\\ \noalign{\medskip}
 \noalign{\medskip}
0&0&0&1\end {array} \right),
\overline{A}_3= \left( \begin {array}{cccc}
1&1&0&0\\ \noalign{\medskip}
0&1&1&0\\ \noalign{\medskip}
0&0&1&0\\ \noalign{\medskip}
 \noalign{\medskip}
0&0&0&1\end {array} \right).
\end{equation}

Then we find, taking the basis in the order $e_1\wedge e_2, e_1\wedge e_3, e_1\wedge e_4, e_2\wedge e_3, e_2\wedge e_4, e_3\wedge e_4$, that

\begin{equation}\label{411}
\bigwedge^2\overline{A}_1=
\left( \begin{smallmatrix}
1&0&0&0&-1&0\\ \noalign{\medskip}
0&1&0&0&0&-1\\ \noalign{\medskip}
0&0&1&0&0&0\\ \noalign{\medskip}
0&0&0&1&0&0\\ \noalign{\medskip}
0&0&0&0&1&0\\ \noalign{\medskip}
0&0&0&0&0&1
\end {smallmatrix} \right),
\bigwedge^2\overline{A}_2=
\left( \begin {smallmatrix}
1&0&1&0&0&0\\ \noalign{\medskip}
0&1&0&0&0&0\\ \noalign{\medskip}
0&0&1&0&0&0\\ \noalign{\medskip}
0&0&0&1&0&-1\\ \noalign{\medskip}
0&0&0&0&1&0\\ \noalign{\medskip}
0&0&0&0&0&1
\end {smallmatrix} \right),
\bigwedge^2\overline{A}_3=
\left( \begin {smallmatrix}
1&1&0&1&0&0\\ \noalign{\medskip}
0&1&0&1&0&0\\ \noalign{\medskip}
0&0&1&0&1&0\\ \noalign{\medskip}
0&0&0&1&0&0\\ \noalign{\medskip}
0&0&0&0&1&1\\ \noalign{\medskip}
0&0&0&0&0&1
\end {smallmatrix}\right).
\end{equation}

Using the basis
$e_1\wedge e_2 \wedge e_3, e_1\wedge e_2 \wedge e_4, e_1\wedge e_3 \wedge e_4, e_2\wedge e_3 \wedge e_4$

\begin{equation}\label{412}
\bigwedge^3 \overline{A}_1= \left( \begin {array}{cccc}
1&0&0&1\\ \noalign{\medskip}
0&1&0&0\\ \noalign{\medskip}
0&0&1&0\\ \noalign{\medskip}
\noalign{\medskip}
0&0&0&1\end {array} \right),
\bigwedge^3 \overline{A}_2= \left( \begin {array}{cccc}
1&0&-1&0\\ \noalign{\medskip}
0&1&0&0\\ \noalign{\medskip}
0&0&1&0\\ \noalign{\medskip}
 \noalign{\medskip}
0&0&0&1\end {array} \right),
\bigwedge^3 \overline{A}_3=\left( \begin {array}{cccc}
1&0&0&0\\ \noalign{\medskip}
0&1&1&1\\ \noalign{\medskip}
0&0&1&1\\ \noalign{\medskip}
 \noalign{\medskip}
0&0&0&1\end {array} \right).
\end{equation}
Now we see in each of the matrices in eq.(\ref{410}) and eq.(\ref{411}) that the first column gives the only common
eigenvector. Accordingly, the only common invariant subspaces of dimensions one and two are $\langle e_1 \rangle$ and
$\langle e_1,e_2 \rangle$, respectively. On the other hand, from eq.(\ref{412}), we see that there are \emph{two}
linearly independent eigenvectors corresponding to the first and second columns of each matrix. It follows that
any linear combination of these eigenvectors will also produce a common invariant subspace, which will be of the
form $\langle e_1,e_2, ae_3+be_4 \rangle, (a^2+b^2\neq0)$. In particular, we see that there are an infinite number of
invariant subspaces of dimension three.

The reader will observe that the original example considered in this Section arises from the adjoint representation
of the unique four-dimensional nilpotent Lie algebra. However, one needs to exercise caution when adding multiples of the identity
to the generators, because they may no longer span a subalgebra.

\section{Pl\"ucker embedding and decomposability}

As a result of the considerations of the Section 2, it will be important to decide whether a given multivector $\Lambda$
is totally decomposable. In the case of a bivector, it is decomposable if and only if $v\wedge v=0$. On the other
hand, if $v$ is of odd degree, then $v\wedge v=0$ holds identically. Furthermore, multivectors of degree one, $n-1$ and
$n$ are always decomposable. However, to handle  multivectors of arbitrary degree, we shall have to introduce some
more machinery.

We continue with our vector space $V$ of dimension $n$.
We let $G(d,V)$ denote the Grassmann manifold of all $d$-planes in $V$.
Then $G(d,V)$ is a smooth manifold of dimension $d(n-d)$.
Pl\"ucker embedding consists of mapping $G(d,V)$ to the projective space
$P(\bigwedge^d V)$; to do so, let $W\in G(d,V)$ and let
$v_1,v_2,...,v_d$ be a basis for $W$. Then map $W$ to
$[v_{1} \wedge v_{2} \wedge... \wedge v_{d}]$, where the parentheses
denote equivalence in $P(\bigwedge^d V)$. The mapping is well defined;
if we take a different basis and wedge its vectors together, the wedge product
differs from the old one by a non-zero factor, that is the determinant of the
change of basis, and so will yield the same element of $P(\bigwedge^d V)$.
Moreover, this mapping is injective, since every totally decomposable
element of $P(\bigwedge^d V)$ arises from a unique subspace.

There are many equivalent ways to characterize total decomposability in terms of the
so called Pl\"ucker relations; see for example, \cite{H,HP,M}. We shall obtain next a convenient formulation
of these conditions. To begin, note that whenever a volume element of $V$ is given, that is, a non-zero
element of $\bigwedge^n V$, there an isomorphism between $\bigwedge^d V$ and
$\bigwedge^{n-d} V^{*}$. As such, we shall use the volume element $v_N=v_{1} \wedge v_{2} \wedge ... \wedge v_{n}$ coming
from our reference basis and we shall denote by $\bar{\Lambda}$ the element $\in \bigwedge^{n-d} V^{*}$ associated to
$\Lambda \in \bigwedge^d V$. Now, we can assert that $\Lambda$ is totally decomposable if and only if its components
satisfy the following Pl\"ucker relations
\begin{equation}\label{PR2}
\begin{array}{cc}
E_{KL}=\sum\limits_{s=1}^n {\omega}_{K}(v_s \wedge \Lambda)   ({\omega}_s \wedge \bar{\Lambda})v_{L}=0,\\
\end{array}
\end{equation}
where $K$ and $L$ are strictly increasing subsequences of
$N=(1,...,n)$ such that $|K|=d+1$ and $|L|=n-d+1$.

In fact it is useful to evaluate $\bar{\Lambda}$ directly in terms of $\Lambda$ as we shall now do.
If $\Lambda=\sum\limits_{|I|=d} x_{I} v_{I} \in \bigwedge^d
V$, then $\bar{\Lambda}=\sum\limits_{|J|=n-d}
y_{J}{\omega}_{J} \in \bigwedge^{n-d} V^{*}$ and
$\Lambda \wedge v_{J}=y_{J} v_N$. Therefore we have $\sum\limits_{|I|=d}
x_{I} v_{I} \wedge v_{J}=y_{J} v_N.$ Now, since $v_I \wedge v_{J}
\ne 0 \Longleftrightarrow I=J'$, where the sequence $I$ is increasing and
complementary to $J.$ Moreover, $v_{J'} \wedge
v_{J} =\text{sgn}\left(J',J\right) v_N$ where $\text{sgn}(\sigma)$
is the signature of the permutation $\sigma$. It follows that
$y_{J}=\text{sgn}\left(J',J\right) x_{J'}$. So
\begin{equation}\label{ww}
\bar{\Lambda}=\sum\limits_{|J|=n-d}\text{sgn}\left(J',J\right) x_{J'}
{\omega}_{J}.
\end{equation}
\subsection{ A convenient formulation of the Pl\"ucker relations}
Now we are in a position to obtain a version of the Pl\"ucker relations that is suitable for use in our algorithm.
\begin{theorem}\rm
Let $V$ be an $n$-dimensional vector space.  The vector $\Lambda
\in \bigwedge^d V$ is totally decomposable if and only
if its components satisfy the following Pl\"ucker relations
\begin{equation}\label{PR1}
\begin{array}{cc}
E_{KL}=\sum\limits_{s\in K \cap L} \text{sgn}\left(s,K \setminus \{s\}\right)\cdot\text{sgn}\left(s,L \setminus \{s\}\right)\cdot\text{sgn}\left((L \setminus \{s\})',L \setminus \{s\}\right)\\
\cdot  x_{K \setminus \{s\}}\cdot x_{(L \setminus \{s\})'}=0,\\
\end{array}
\end{equation} where $K, L$ are strictly
increasing subsequences of $N=(1, 2,...,n)$ such that $\mid
K\mid=d+1, \mid L\mid=n-d+1$, the sequence $(L \setminus \{s\})'$
denotes the increasing sequence complementary to $L \setminus \{s\}$ and
$\text{sgn}(\sigma)$ is the signature of the permutation $\sigma$.
\end{theorem}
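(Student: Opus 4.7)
The plan is to start from the Plücker relations in the form already stated in equation (4.1), namely
\[
E_{KL}=\sum_{s=1}^{n}\omega_{K}(v_{s}\wedge\Lambda)\,(\omega_{s}\wedge\bar\Lambda)v_{L},
\]
and reduce it to the explicit component form (4.3) by expanding each of the two factors in the reference basis and using the description of $\bar\Lambda$ supplied by equation (4.2). The work is essentially bookkeeping with signs arising from re-ordering wedge products, and the strategy is to isolate the single index $s$ that makes each of the two factors non-zero.

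First I would write $\Lambda=\sum_{|I|=d}x_{I}v_{I}$ and observe that $v_{s}\wedge v_{I}=0$ unless $s\notin I$, in which case $v_{s}\wedge v_{I}=\mathrm{sgn}(s,I)\,v_{\{s\}\cup I}$ (with $\{s\}\cup I$ written as an increasing sequence). Applying $\omega_{K}$ then picks out the unique term for which $\{s\}\cup I=K$, forcing $s\in K$ and $I=K\setminus\{s\}$. Thus
\[
\omega_{K}(v_{s}\wedge\Lambda)=\mathrm{sgn}(s,K\setminus\{s\})\,x_{K\setminus\{s\}}\qquad(s\in K),
\]
and this factor vanishes otherwise. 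In exactly the same way I would expand $\bar\Lambda$ using (4.2), wedge on $\omega_{s}$, and evaluate on $v_{L}$: only $J=L\setminus\{s\}$ with $s\in L$ contributes, and the computation produces
\[
(\omega_{s}\wedge\bar\Lambda)v_{L}=\mathrm{sgn}(s,L\setminus\{s\})\,\mathrm{sgn}\bigl((L\setminus\{s\})',L\setminus\{s\}\bigr)\,x_{(L\setminus\{s\})'}\qquad(s\in L).
\]

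Multiplying the two factors, the product is non-zero only when $s\in K\cap L$, and restricting the sum to that set yields precisely the right-hand side of (4.3). Since (4.1) was already known to hold if and only if $\Lambda$ is totally decomposable, the resulting identity gives the stated equivalence.

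The conceptual content is therefore minimal; the real task is the careful sign tracking. The main obstacle I anticipate is making sure the three signs in the final formula are assembled consistently: the $\mathrm{sgn}(s,K\setminus\{s\})$ coming from re-sorting $\{s\}\cup(K\setminus\{s\})$ into increasing order, the $\mathrm{sgn}(s,L\setminus\{s\})$ coming from the analogous re-sorting on the dual side, and the $\mathrm{sgn}((L\setminus\{s\})',L\setminus\{s\})$ built into the formula (4.2) for $\bar\Lambda$. Once these are cleanly separated and the convention that $\{s\}\cup I$ is always re-sorted into an increasing sequence is fixed throughout, the identity falls out directly.
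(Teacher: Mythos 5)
Your proposal is correct and follows essentially the same route as the paper's own proof: both start from the abstract relations $E_{KL}=\sum_{s}\omega_{K}(v_{s}\wedge\Lambda)(\omega_{s}\wedge\bar{\Lambda})v_{L}=0$ (taken as known to characterize total decomposability), substitute the component expansions of $\Lambda$ and $\bar{\Lambda}$, observe that $\omega_{K}(v_{s}\wedge v_{I})\neq 0$ forces $s\in K$ and $I=K\setminus\{s\}$ while $(\omega_{s}\wedge\omega_{J})v_{L}\neq 0$ forces $s\in L$ and $J=L\setminus\{s\}$, and then collect the three signs. The only difference is cosmetic: you evaluate the two factors separately before multiplying, whereas the paper carries the double sum along and restricts it at the end.
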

\proof Let $I$ and $J$ be
strictly increasing subsequences of $N=(1, 2,...,n)$ such that
$|I|=d$ and $|J|=n-d$. For the subsequences $I:i_1<i_2<...<i_d$ and
$J:j_1<j_2<...<j_{n-d}$, we write $v_{i_1} \wedge v_{i_2} \wedge ... \wedge
v_{i_d}=v_I$ and $\omega_{j_1} \wedge \omega_{j_2} \wedge... \wedge
\omega_{j_{n-d}}=\omega_J$.

Then adapting the formulas for $\Lambda$ and $\bar{\Lambda}$ from equation (\ref{ww}) in equation (\ref{PR2}), gives
\begin{equation}\label{PR12}
\begin{array}{cc}
E_{KL}=\sum\limits_{s=1}^n \sum\limits_{|I|=d} x_{I}{\omega}_{K}(v_s \wedge v_{I}) \sum\limits_{|J|=n-d} \text{sgn}\left( J',J\right) x_{J'}({\omega}_s \wedge {\omega}_{J})v_{L}=0.\\
\end{array}
\end{equation}

Now ${\omega}_{K}(v_s \wedge v_{I}) \ne 0
\Longleftrightarrow I=K\setminus
 \{s\}$ and $s\in K$. Similarly,  $({\omega}_s \wedge {\omega}_{J})v_{L} \ne 0 \Longleftrightarrow J=L\setminus
 \{s\}$ and $s\in L$. Thus equation (\ref{PR2}) can be rewritten as
\begin{eqnarray}
&& \sum\limits_{s\in K \cap L}  \text{sgn}\left( (L\setminus\{s\})',L\setminus\{s\}\right) x_{(L\setminus\{s\})'} x_{K\setminus \{s\}}{\omega}_{K}(v_s \wedge v_{K\setminus\{s\}})  ({\omega}_s \wedge {\omega}_{L\setminus \{s\}})v_{L}\nonumber\\
&& = E_{KL}=0.\label{PR3}
\end{eqnarray}
Finally, substituting $v_s \wedge v_{K\setminus
 \{s\}}=\text{sgn}\left(s,K \setminus \{s\}\right) v_K$ and ${\omega}_s \wedge {\omega}_{L\setminus
 \{s\}}=\text{sgn}\left(s,L \setminus \{s\}\right) {\omega}_L$ in equation (\ref{PR3}) completes the proof.
\endproof
\subsection{Procedure for computing the divisors of  totally decomposable vector $\Lambda
\in \bigwedge^d V$}
Given a totally decomposable $\Lambda \in \bigwedge^2 V$,
there exist two linearly independent vectors $u_1,
u_2 \in V$ such that $\Lambda  =u_1 \wedge u_2$.  Moreover, the
divisors $u_1, u_2$ satisfy the conditions $u_1 \wedge \Lambda  =0$ and $u_2 \wedge \Lambda  =0$. Therefore, for a given  totally decomposable vector $\Lambda =\sum\limits_{1\leq i<j \leq n} x_{ij} v_i\wedge
v_j$, if we assume that $u=\sum\limits_{k=1}^n a_k v
_k$ is a divisor of $\Lambda$, then one
can find $u$ by solving the linear system generated by comparing
the coefficients of the basis of $\bigwedge^3 V$ in the
following equation $$u \wedge \Lambda  = \sum\limits_{1\leq i<j \leq n}
\sum\limits_{k=1}^n x_{ij} a_k ~v_k \wedge ~v_i\wedge v_j=0.$$
Solving this linear system will give the two divisors $u_1,u_2 \in V$ such that $\Lambda  =u_1 \wedge u_2$.

Now, if the  vector $\Lambda \in \bigwedge^2 V$ involves some parameters, applying the Pl\"ucker relations (\ref{PR1})
provides initial constraints on the these parameters so as to keep $\Lambda$ totally decomposable in  $\bigwedge^2 V$. Moreover, the system of divisors of $\Lambda$ will inherit these parameters. Therefore, we need to solve the system of divisors with the initial constraints for all the possible cases of the parameters. This can be achieved using the command ``PreComprehensiveTriangularize($sys, d, R$)" in Maple 13, that is given in \cite{CGLMP}. This command returns a pre-comprehensive triangular decomposition of sys, with respect to the last $d$ variables of $R$.

Similarly, the above idea can be extended to find the divisors of a totally decomposable vector $\Lambda
\in \bigwedge^d V$ for $2<d<n$.\\
\section{Algorithms}
\subsection{Algorithm $A$: Determining the common one-dimensional invariant subspaces for a set of $n \times n$ matrices $\{A_i \}_{ i=1}^N$.}
\begin{enumerate}
\item Input: $\{A_i \}_{ i=1}^N$.
\item Find the set of eigenvalues $\sigma(A_i)$ for each matrix $A_i$.
\item Find $\Omega=\{(\lambda_1, \lambda_2,..., \lambda_N): \lambda_i \in \sigma(A_i)  \}.$
\item Construct the following matrix for each $(\lambda_1, \lambda_2,..., \lambda_N) \in \Omega$:
\begin{equation*}
B(\lambda_1, \lambda_2,..., \lambda_N)=
 \left( \begin {array}{c}
A_1-\lambda_1 I\\ \noalign{\medskip}
A_2-\lambda_2 I\\ \noalign{\medskip}
.\\
.\\
.\\
A_N-\lambda_N I\\ \noalign{\medskip}
\end {array} \right) .
\end{equation*}
\item Compute the  null space $\Lambda$  for each matrix $B(\lambda_1, \lambda_2,..., \lambda_N)$.
\item Output: The set of all possible $((\lambda_1, \lambda_2,..., \lambda_N), \Lambda)$.
\end{enumerate}
\subsection{Algorithm $B$:  Determining the common invariant subspaces  of dimension $1<d<n$ for a set of $n \times n$ matrices $\{A_i \}_{ i=1}^N$.}
\begin{enumerate}
\item Input: $\{A_i \}_{ i=1}^N$ and $d$.
\item  Find $s$ such that $\{A_i + sI\}_{ i=1}^N$ are invertible matrices. Let $\bar A_i=A_i + sI$ for $i=1,2,...,N$.
\item  Compute the $m \times m$ matrices $\{\bigwedge^d \bar A_i \}_{ i=1}^N$ where $m=\dim \bigwedge^d V=\binom n d$.
\item Determine the set of all common one-dimensional invariant subspaces and the corresponding sequence of eigenvalues $(\lambda, \Lambda)$ to the set of $m \times m$ matrices $\{\bigwedge^d \bar A_i \}_{ i=1}^N$ using Algorithm A.
\item Construct the Pl\"ucker relations  in $\bigwedge^d V$ using the  convenient formulation given in section (4.1).
\item For each sequence of eigenvalues $\lambda$,  check if the coefficients of the corresponding one-dimensional invariant subspace $\Lambda$ satisfy the Pl\"ucker relations. If the  vector $\Lambda \in \bigwedge^d V$ involves some parameters, apply the Pl\"ucker relations to find the initial constraints on the these parameters.
\item Find the divisors $\{v_1, v_2, ..., v_d\}$ of each totally decomposable one-dimensional invariant subspace $\Lambda=v_1 \wedge v_2 \wedge ... \wedge v_d$ using the procedure given in section (4.2).
\item Output: The set of all possible $(\lambda, \langle v_1, v_2, ..., v_d \rangle)$.
\end{enumerate}
\section{Examples}
\begin{example}
\begin{equation}
A_1= \left( \begin {array}{ccccccc}
3&0&0&0&0&0&0\\ \noalign{\medskip}
0&2&0&0&0&0&0\\ \noalign{\medskip}
0&0&2&0&0&0&0\\ \noalign{\medskip}
0&0&0&1&0&0&0\\ \noalign{\medskip}
0&0&0&0&1&0&0\\ \noalign{\medskip}
0&0&0&0&0&1&0\\ \noalign{\medskip}
0&0&0&0&0&0&3\end {array} \right),
A_2=\left( \begin {array}{ccccccc}
 0&0&0&0&0&0&0\\ \noalign{\medskip}
0&0&0&0&0&0&0\\ \noalign{\medskip}
0&1&0&0&0&0&0\\ \noalign{\medskip}
0&0&0&0&0&0&0\\ \noalign{\medskip}
0&0&0&0&0&0&0\\ \noalign{\medskip}
0&0&0&1&0&0&0\\ \noalign{\medskip}
1&0&0&0&0&0&0\end {array} \right).
\end{equation}
Using algorithms $A$ and $B$ with $s=1$, the complete list of the common invariant subspaces for the set of matrices $\{A_1,A_2\}$ is  as follows:
 \begin{itemize}
\item Zero-dimensional subspaces:\\
$\{\bf 0\}$
\item  One-dimensional subspaces:\\
\begin{tabular}{ |c|c| }
 \hline
Sequence of eigenvalues&  One-dimensional  \\
$\lambda=(\lambda_1,\lambda_2)$ for $\{ \bar A_i \}_{ i=1}^2$ &invariant subspace\\
 \hline
 (2,1)  & $\langle e_5\rangle,\langle e_6+\alpha e_5\rangle$ \\
 \hline
 (3,1) & $\langle e_3\rangle$\\
 \hline
 (4,1) & $\langle e_7\rangle$ \\
 \hline
\end{tabular}
\item  Two-dimensional subspaces:\\
\begin{tabular}{ |c|c| }
 \hline
Sequence of eigenvalues &    Two-dimensional \\
$\lambda=(\lambda_1,\lambda_2)$ for $\{\bigwedge^2 \bar A_i \}_{ i=1}^2$  & invariant subspace\\
 \hline
(4,1) &  $\langle e_4 ,e_6\rangle,\langle e_5+\alpha e_4 ,e_6\rangle$\\
 \hline
(6,1) &   $\langle e_3, e_5\rangle,\langle e_3,e_6+\alpha  e_5\rangle$\\
 \hline
(8,1) &  $\langle e_5,e_7\rangle,\langle e_6+\alpha e_5,e_7\rangle$\\
 \hline
(9,1) &   $\langle e_2,e_3\rangle$\\
 \hline
(12,1) & $\langle e_3,e_7\rangle$\\
 \hline
(16,1) &   $\langle e_1,e_7\rangle$\\
 \hline
\end{tabular}
\item Three-dimensional subspaces:\\
\begin{tabular}{ |c|c|}
 \hline
Sequence of eigenvalues&      Three-dimensional \\
$\lambda=(\lambda_1,\lambda_2)$ for $\{\bigwedge^3 \bar A_i \}_{ i=1}^2$ &  invariant  subspace\\
 \hline
(8,1) &  $\langle e_4, e_5,e_6\rangle$\\
 \hline
(12,1) &   $\langle e_3,e_4,e_6\rangle,\langle e_3, e_5+\alpha e_4,e_6\rangle$\\
 \hline
(16,1) &  $\langle e_4, e_6,e_7\rangle,\langle e_5+\alpha e_4, e_6,e_7\rangle$\\
 \hline
(18,1) &  $\langle e_2, e_3, e_5\rangle, \langle e_2, e_3, e_6+\alpha e_5\rangle$\\
 \hline
(24,1) &  $\langle e_3, e_5 ,e_7\rangle, \langle e_3, e_6+\alpha e_5,e_7\rangle$\\
 \hline
(32,1)  &  $\langle e_1,e_5,e_7\rangle, \langle e_1, e_6+\alpha e_5,e_7\rangle$\\
 \hline
(36,1)  &  $\langle e_2, e_3,e_7\rangle$\\
 \hline
(48,1)  &  $\langle e_1, e_3,e_7\rangle$\\
 \hline
\end{tabular}
\item  Four-dimensional subspaces:\\
\begin{tabular}{ |c|c|}
 \hline
Sequence of eigenvalues&     Four-dimensional \\
$\lambda=(\lambda_1,\lambda_2)$ for $\{\bigwedge^4 \bar A_i \}_{ i=1}^2$  & invariant  subspace \\
 \hline
(24,1) &  $\langle e_3, e_4,e_5,e_6\rangle$\\
 \hline
(32,1)  &  $\langle e_4, e_5,e_6,e_7\rangle$\\
 \hline
(36,1) &   $\langle e_2, e_3,e_4,e_6\rangle,\langle e_2, e_3,e_5+\alpha e_4,e_6\rangle$\\
 \hline
(48,1) &   $\langle e_3, e_4,e_6,e_7\rangle,\langle e_3,  e_5+\alpha e_4,e_6,e_7\rangle$\\
 \hline
(64,1) &  $\langle e_1, e_4,e_6,e_7\rangle,\langle e_1,e_5+\alpha e_4,e_6,e_7\rangle$\\
 \hline
(72,1) &  $\langle e_2, e_3,e_5,e_7\rangle,\langle e_2, e_3,e_6+\alpha e_5,e_7\rangle$\\
 \hline
(96,1) &   $\langle e_1, e_3, e_5,e_7\rangle,\langle e_1, e_3, e_6+\alpha e_5,e_7\rangle$\\
 \hline
(144,1) &   $\langle e_1, e_2,e_3,e_7\rangle$\\
 \hline
\end{tabular}
\item  Five-dimensional subspaces:\\
\begin{tabular}{ |c|c| c|}
 \hline
Sequence of eigenvalues &   Five-dimensional \\
$\lambda=(\lambda_1,\lambda_2)$ for $\{\bigwedge^5 \bar A_i \}_{ i=1}^2$ & invariant  subspace \\
 \hline
(72,1) &   $\langle e_2, e_3,e_4,e_5,e_6\rangle$\\
 \hline
(96,1) &   $\langle e_3, e_4,e_5,e_6,e_7\rangle$\\
 \hline
(128,1) &  $\langle e_1, e_4,e_5,e_6,e_7\rangle$\\
 \hline
(144,1) &  $\langle e_2, e_3,e_4,e_6,e_7\rangle,\langle e_2, e_3,e_5+\alpha e_4,e_6,e_7\rangle$\\
 \hline
(192,1) &   $\langle e_1, e_3, e_4,e_6,e_7\rangle,\langle e_1, e_3,e_5+\alpha e_4,e_6,e_7\rangle$\\
 \hline
(288,1) &   $\langle e_1, e_2,e_3,e_5,e_7\rangle, \langle e_1, e_2,e_3, e_6+\alpha e_5,e_7\rangle$\\
 \hline
\end{tabular}
\item  Six-dimensional subspaces:\\
\begin{tabular}{ |c|c|}
\hline
Sequence of eigenvalues &      Six-dimensional \\
$\lambda=(\lambda_1,\lambda_2)$ for $\{\bigwedge^6 \bar A_i \}_{ i=1}^2$  & invariant  subspace \\
 \hline
(288,1) &   $\langle e_2, e_3,e_4, e_5,e_6, e_7\rangle$\\
 \hline
(384,1) &   $\langle e_1,e_3, e_4,e_5,e_6, e_7\rangle$\\
 \hline
(576,1) &  $\langle e_1, e_2,e_3, e_4,e_6, e_7\rangle,\langle e_1, e_2,e_3, e_5+\alpha e_4,e_6, e_7\rangle$\\
 \hline
\end{tabular}
\item  Seven-dimensional subspaces:\\
\begin{tabular}{ |c|c|}
\hline
Sequence of eigenvalues&   Seven-dimensional \\
$\lambda=(\lambda_1,\lambda_2)$ for $\{\bigwedge^7 \bar A_i \}_{ i=1}^2$ & invariant  subspace\\
 \hline
(1152, 1) &  $\langle e_1, e_2,e_3,e_4,e_5,e_6,e_7\rangle$\\
 \hline
\end{tabular}
 \end{itemize}
\end{example}
\begin{example}
\begin{equation}
A_1= \left( \begin {array}{ccccccc}
3&0&0&0&0&0&0\\ \noalign{\medskip}
0&2&0&0&0&0&0\\ \noalign{\medskip}
0&0&2&0&0&0&0\\ \noalign{\medskip}
0&0&0&1&0&0&0\\ \noalign{\medskip}
0&0&0&0&1&0&0\\ \noalign{\medskip}
0&0&0&0&0&1&0\\ \noalign{\medskip}
0&0&0&0&0&0&3
\end {array} \right),
A_2=\left( \begin {array}{ccccccc}
0&0&0&0&0&0&0\\ \noalign{\medskip}
0&0&0&1&0&0&0\\ \noalign{\medskip}
0&1&0&0&0&0&0\\ \noalign{\medskip}
0&0&0&0&0&0&0\\ \noalign{\medskip}
0&0&0&0&1&0&0\\ \noalign{\medskip}
0&0&0&0&1&0&0\\ \noalign{\medskip}
1&0&0&0&0&0&0\end {array} \right) .
\end{equation}
Using algorithms $A$ and $B$ with $s=1$, the complete list of the common invariant subspaces for the set of matrices $\{A_1,A_2\}$  is as follows:
 \begin{itemize}
\item Zero-dimensional subspaces:\\
$\{\bf 0\}$
\item  One-dimensional subspaces:\\
\begin{tabular}{ |c|c| }
 \hline
Sequence of eigenvalues&   One-dimensional \\
$\lambda=(\lambda_1,\lambda_2)$ for $\{ \bar A_i \}_{ i=1}^2$  &invariant subspace\\
 \hline
 (2,1)& $\langle e_6\rangle$ \\
 \hline
  (2,2)& $\langle e_5+e_6\rangle$ \\
 \hline
 (3,1) & $\langle e_3\rangle$\\
 \hline
 (4,1) & $\langle e_7\rangle$ \\
 \hline
\end{tabular}
\item  Two-dimensional subspaces:\\
\begin{tabular}{ |c|c|}
 \hline
Sequence of eigenvalues&     Two-dimensional \\
$\lambda=(\lambda_1,\lambda_2)$ for $\{\bigwedge^2 \bar A_i \}_{ i=1}^2$ & invariant  subspace \\
 \hline
(4,2) &  $\langle e_5,e_6\rangle$\\
 \hline
(6,1) &  $\langle e_3,e_6\rangle$\\
 \hline
(6,2) &  $\langle e_3,e_5+e_6\rangle$\\
 \hline
(8,1) &  $\langle e_6,e_7\rangle$\\
 \hline
(8,2) &  $\langle e_5+e_6,e_7\rangle$\\
 \hline
(9,1) &  $\langle e_2,e_3\rangle$\\
 \hline
(12,1) &  $\langle e_3,e_7\rangle$\\
 \hline
(16,1) &  $\langle e_1,e_7\rangle$\\
 \hline
\end{tabular}
\item Three-dimensional subspaces:\\
\begin{tabular}{ |c|c|}
 \hline
Sequence of eigenvalues&  Three-dimensional \\
$\lambda=(\lambda_1,\lambda_2)$ for $\{\bigwedge^3 \bar A_i \}_{ i=1}^2$& invariant  subspace \\
 \hline
(12,2) & $\langle e_3, e_5,e_6\rangle$\\
 \hline
(16,2) &   $\langle e_5, e_6,e_7\rangle$\\
 \hline
(18,1) &  $\langle e_2, e_3,e_4\rangle, \langle e_2, e_3,e_6+\alpha e_4\rangle$\\
 \hline
(18,2) &  $\langle e_2, e_3,e_5+e_6\rangle$\\
 \hline
(24,1) &   $\langle e_3, e_6,e_7\rangle$\\
 \hline
(24,2) &   $\langle e_3, e_5+e_6,e_7\rangle$\\
 \hline
(32,1) &  $\langle e_1, e_6,e_7\rangle$\\
 \hline
(32,2) &  $ \langle e_1, e_5+e_6,e_7\rangle$\\
 \hline
(36,1) &   $\langle e_2, e_3,e_7\rangle$\\
 \hline
(48,1) &   $\langle e_1, e_3,e_7\rangle$\\
 \hline
\end{tabular}
\item  Four-dimensional subspaces:\\
\begin{tabular}{ |c|c|}
 \hline
Sequence of eigenvalues &      Four-dimensional \\
$\lambda=(\lambda_1,\lambda_2)$ for $\{\bigwedge^4 \bar A_i \}_{ i=1}^2$ & invariant  subspace \\
 \hline
(36,1) &  $\langle e_2, e_3,e_4,e_6\rangle,$\\
 \hline
(36,2)     &  $\langle e_2, e_3,e_4,e_5+e_6\rangle,\langle e_2, e_3, e_5+\alpha e_4,e_6-\alpha e_4\rangle$\\
 \hline
(48,2) &  $\langle e_3, e_5,e_6,e_7\rangle$\\
 \hline
(64,2) &   $\langle e_1, e_5,e_6,e_7\rangle$\\
 \hline
  (72,1)         & $\langle e_2, e_3,e_4,e_7\rangle, \langle e_2, e_3, e_6+\alpha e_4,e_7\rangle$\\
 \hline
(72,2) & $\langle e_2, e_3,e_5+e_6,e_7\rangle$\\
 \hline
(96,1) &  $\langle e_1, e_3,e_6,e_7\rangle$\\
 \hline
(96,2) &  $\langle e_1, e_3,e_5+e_6,e_7\rangle$\\
 \hline
(144,1) &  $\langle e_1, e_2,e_3,e_7\rangle$\\
 \hline
\end{tabular}
\item  Five-dimensional subspaces:\\
\begin{tabular}{ |c|c|}
 \hline
Sequence of eigenvalues &       Five-dimensional \\
$\lambda=(\lambda_1,\lambda_2)$ for $\{\bigwedge^5 \bar A_i \}_{ i=1}^2$ &invariant  subspace\\
 \hline
(72,2) &  $\langle e_2, e_3,e_4,e_5,e_6\rangle$\\
 \hline
(144,1) &  $\langle e_2, e_3,e_4,e_6,e_7\rangle,$\\
 \hline
  (144,2)        & $\langle e_2, e_3, e_4,e_5+e_6,e_7\rangle,\langle e_2, e_3,e_5+\alpha e_4,e_6-\alpha e_4,e_7\rangle$\\
 \hline
(192,2) &   $\langle e_1, e_3,e_5,e_6,e_7\rangle$\\
 \hline
 (288,1)       &   $\langle e_1, e_2,e_3, e_4,e_7\rangle,\langle e_1, e_2,e_3,e_6+\alpha e_4,e_7\rangle$\\
 \hline
(288,2) &  $\langle e_1, e_2,e_3,e_5+e_6,e_7\rangle$\\
 \hline
\end{tabular}
\item  Six-dimensional subspaces:\\
\begin{tabular}{ |c|c|}
\hline
Sequence of eigenvalues&      Six-dimensional \\
$\lambda=(\lambda_1,\lambda_2)$ for $\{\bigwedge^6 \bar A_i \}_{ i=1}^2$ &  invariant  subspace \\
 \hline
(288,2) &   $\langle e_2,e_3, e_4,e_5,e_6, e_7\rangle$\\
 \hline
(576,1) &   $\langle e_1, e_2,e_3, e_4,e_6, e_7\rangle$\\
 \hline
 (576,2)     &   $\langle e_1, e_2,e_3, e_4,e_5+e_6, e_7\rangle,$\\
& $\langle e_1, e_2,e_3, ,e_5+\alpha e_4,e_6-\alpha e_4,e_7\rangle$\\
 \hline
\end{tabular}
\item  Seven-dimensional subspaces:\\
\begin{tabular}{ |c|c|}
\hline
Sequence of eigenvalues&    Seven-dimensional \\
$\lambda=(\lambda_1,\lambda_2)$ for $\{\bigwedge^7 \bar A_i \}_{ i=1}^2$   &invariant  subspace\\
 \hline
(1152, 2) &   $\langle e_1, e_2,e_3,e_4,e_5,e_6,e_7\rangle$\\
 \hline
\end{tabular}
 \end{itemize}
\end{example}
\begin{example}
\begin{equation}
\begin {array}{ll}
A_1= \left( \begin {array}{ccccccccc}
0&0&0&0&0&0&0&0&0\\ \noalign{\medskip}
0&2&0&0&0&0&0&0&0\\ \noalign{\medskip}
0&0&1&0&0&0&0&0&0\\ \noalign{\medskip}
0&0&0&-2&0&0&0&0&0\\ \noalign{\medskip}
0&0&0&0&0&0&0&0&0\\ \noalign{\medskip}
0&0&0&0&0&-1&0&0&0\\ \noalign{\medskip}
0&0&0&0&0&0&-1&0&0\\ \noalign{\medskip}
0&0&0&0&0&0&0&1&0\\ \noalign{\medskip}0&0&0&0&0&0&0&0&0
\end {array} \right),
A_2=\left( \begin {array}{ccccccccc}
 0&0&0&1&0&0&0&0&0\\ \noalign{\medskip}
-1&0&0&0&1&0&0&0&0\\ \noalign{\medskip}
0&0&0&0&0&1&0&0&0\\ \noalign{\medskip}
0&0&0&0&0&0&0&0&0\\ \noalign{\medskip}
0&0&0&-1&0&0&0&0&0\\ \noalign{\medskip}
0&0&0&0&0&0&0&0&0\\ \noalign{\medskip}
0&0&0&0&0&0&0&0&0\\ \noalign{\medskip}
0&0&0&0&0&0&-1&0&0\\ \noalign{\medskip}
0&0&0&0&0&0&0&0&0
\end {array} \right),\\
A_3=\left( \begin {array}{ccccccccc}
 0&-1&0&0&0&0&0&0&0\\ \noalign{\medskip}
0&0&0&0&0&0&0&0&0\\ \noalign{\medskip}
0&0&0&0&0&0&0&0&0\\ \noalign{\medskip}
1&0&0&0&-1&0&0&0&0\\ \noalign{\medskip}
0&1&0&0&0&0&0&0&0\\ \noalign{\medskip}
0&0&1&0&0&0&0&0&0\\ \noalign{\medskip}
0&0&0&0&0&0&0&-1&0\\ \noalign{\medskip}
0&0&0&0&0&0&0&0&0\\ \noalign{\medskip}
0&0&0&0&0&0&0&0&0\end {array} \right).
\end{array}
\end{equation}
Using algorithms $A$ and $B$ with $s=3$, the complete list of the common invariant subspaces for the set of matrices $\{A_1,A_2,A_3\}$  is as follows:
 \begin{itemize}
\item Zero-dimensional subspaces:\\
$\{\bf 0\}$
\item  One-dimensional subspaces:\\
\begin{tabular}{ |c|c| }
 \hline
Sequence of eigenvalues&   One-dimensional \\
$\lambda=(\lambda_1,\lambda_2,\lambda_3)$ for $\{ \bar A_i \}_{ i=1}^3$  &invariant subspace\\
 \hline
 (3, 3, 3)& $\langle e_1+e_5\rangle, \langle e_9+\alpha(e_1+e_5) \rangle$ \\
 \hline
\end{tabular}
\item  Two-dimensional subspaces:\\
\begin{tabular}{ |c|c|}
 \hline
Sequence of eigenvalues&     Two-dimensional \\
$\lambda=(\lambda_1,\lambda_2,\lambda_3)$ for $\{\bigwedge^2 \bar A_i \}_{ i=1}^3$ & invariant  subspace \\
 \hline
(8, 9, 9)  &  $\langle e_3, e_6\rangle, \langle  e_7+\alpha e_6, e_8-\alpha e_3\rangle$\\
 \hline
(9, 9, 9)  &  $\langle e_1+e_5,e_9\rangle$\\
 \hline
\end{tabular}
\item Three-dimensional subspaces:\\
\begin{tabular}{ |c|c|}
 \hline
Sequence of eigenvalues&       Three-dimensional \\
$\lambda=(\lambda_1,\lambda_2,\lambda_3)$ for $\{\bigwedge^3 \bar A_i \}_{ i=1}^3$& invariant  subspace \\
 \hline
(24, 27, 27)    & $\langle e_3,e_6,e_9+\alpha (e_1+e_5)\rangle,$\\
 & $\langle e_3,e_1+e_5,e_6\rangle,$\\
 & $\langle e_7, e_8, e_9+\alpha (e_1+e_5)\rangle,$\\
  & $ \langle e_1+e_5, e_7+\alpha e_6, e_8-\alpha e_3\rangle,$\\
  & $ \langle e_7+\alpha e_6, e_8-\alpha e_3,e_9\rangle,$\\
  & $ \langle e_7+\alpha e_6, e_8-\alpha e_3,e_9+\alpha (e_1+e_5)\rangle$\\
 \hline
 (15, 27, 27)& $ \langle e_2,e_4,e_5- e_1\rangle$\\
 \hline
\end{tabular}
\item  Four-dimensional subspaces:\\
\begin{tabular}{ |c|c|}
 \hline
Sequence of eigenvalues &      Four-dimensional \\
$\lambda=(\lambda_1,\lambda_2,\lambda_3)$ for $\{\bigwedge^4 \bar A_i \}_{ i=1}^3$ & invariant  subspace \\
 \hline
(64, 81, 81) &   $\langle e_3,e_6,e_7,e_8\rangle$\\
 \hline
(72, 81, 81) &   $\langle e_3, e_1+e_5, e_6,e_9\rangle,$\\
&   $\langle e_1+e_5, e_7+\alpha e_6, e_8-\alpha e_3,e_9\rangle$\\
 \hline
(45, 81, 81)& $\langle e_1,e_2,e_4,e_5\rangle,$\\
 & $ \langle e_2,e_4,e_5- e_1,e_9+\alpha e_1\rangle$\\
 \hline
\end{tabular}
\item  Five-dimensional subspaces:\\
\begin{tabular}{ |c|c|}
 \hline
Sequence of eigenvalues &       Five-dimensional \\
$\lambda=(\lambda_1,\lambda_2,\lambda_3)$ for $\{\bigwedge^5 \bar A_i \}_{ i=1}^3$ &invariant  subspace\\
 \hline
(192, 243, 243)  &  $\langle e_3,e_1+e_5,e_6,e_7,e_8\rangle,$\\
& $\langle e_3,e_6,e_7,e_8,e_9+\alpha ( e_1+e_5)\rangle$\\
 \hline
(120, 243, 243)  &   $\langle e_2,e_3,e_4,e_5- e_1,e_6\rangle, $\\
                         & $\langle e_2,e_4,e_5- e_1,e_7+\alpha e_6 ,e_8-\alpha e_3\rangle$\\
 \hline
  (135, 243, 243)        &   $\langle e_1,e_2,e_4,e_5,e_9\rangle$\\
 \hline
\end{tabular}
\item  Six-dimensional subspaces:\\
\begin{tabular}{ |c|c|}
\hline
Sequence of eigenvalues&      Six-dimensional \\
$\lambda=(\lambda_1,\lambda_2,\lambda_3)$ for $\{\bigwedge^6 \bar A_i \}_{ i=1}^3$ &  invariant  subspace \\
 \hline
(576, 729, 729)  &  $\langle e_3,e_1+e_5, e_6,e_7,e_8,e_9\rangle$\\
 \hline
 (360, 729, 729)          &   $\langle e_1,e_2,e_3,e_4, e_5,e_6\rangle,$\\
&   $\langle e_1, e_2, e_4, e_5, e_7+\alpha e_6, e_8-\alpha e_3\rangle,$\\
&   $\langle e_2,e_3,e_4,e_5- e_1,e_6,e_9+\alpha e_1\rangle,$\\
& $\langle e_2, e_4, e_5+e_1, e_7+\alpha e_6, e_8-\alpha e_3, e_9+\beta e_1\rangle$\\
 \hline
\end{tabular}
\item  Seven-dimensional subspaces:\\
\begin{tabular}{ |c|c|}
\hline
Sequence of eigenvalues&    Seven-dimensional \\
$\lambda=(\lambda_1,\lambda_2,\lambda_3)$ for $\{\bigwedge^7 \bar A_i \}_{ i=1}^3$   &invariant  subspace\\
 \hline
 (960, 2187, 2187)&   $\langle e_2,e_3,e_4,e_5- e_1,e_6,e_7,e_8\rangle$\\
 \hline
 (1080, 2187, 2187)    &   $\langle e_1,e_2,e_3,e_4, e_5,e_6,e_9\rangle,$\\
    &   $\langle e_1, e_2, e_4, e_5, e_7+\alpha e_6, e_8 -\alpha e_3, e_9\rangle$\\
 \hline
\end{tabular}
\item  Eight-dimensional subspaces:\\
\begin{tabular}{ |c|c|}
\hline
Sequence of eigenvalues&    Eight-dimensional \\
$\lambda=(\lambda_1,\lambda_2,\lambda_3)$ for $\{\bigwedge^8 \bar A_i \}_{ i=1}^3$   &invariant  subspace\\
 \hline
 (2880, 6561, 6561)&   $\langle e_1,e_2,e_3,e_4, e_5,e_6,e_7,e_8\rangle,$\\
           &   $\langle e_2,e_3,e_4, e_5-e_1,e_6,e_7,e_8,e_9+\alpha e_1\rangle$\\
 \hline
\end{tabular}
\item  Nine-dimensional subspaces:\\
\begin{tabular}{ |c|c|}
\hline
Sequence of eigenvalues&    Nine-dimensional \\
$\lambda=(\lambda_1,\lambda_2,\lambda_3)$ for $\{\bigwedge^9 \bar A_i \}_{ i=1}^3$   &invariant  subspace\\
 \hline
(8640, 19683, 19683) &   $\langle e_1,e_2,e_3,e_4, e_5,e_6,e_7,e_8,e_9\rangle$\\
 \hline
\end{tabular}
 \end{itemize}
\end{example}

\subsection*{Acknowledgments}
Ahmad Y. Al-Dweik would like to thank Qatar University for its support and excellent research facilities. R. Ghanam and G. Thompson are grateful to VCU Qatar and Qatar Foundation for their support.


\begin{thebibliography}{99}

\bibitem[BF]{BF} L. Brickman and P. A. Fillmore, {\it The Invariant Subspace Lattice of a Linear Transformation}, Canadian Journal of Mathematics {\bf 19}, 810-822, 1967.
\bibitem[CGLMP]{CGLMP} C. Chen, C, O. Golubitsky, F. Lemaire, M. Moreno Maza and W. Pan,, {\it Comprehensive Triangular Decomposition}, Proc. CASC 2007, LNCS Vol. 4770, pp. 73-101. Springer, 2007.
\bibitem[E]{E} P. Enflo,  {\it On the invariant subspace problem for Banach spaces}, Acta Mathematica, {\bf 158}(3), 213-313, (1987).
\bibitem[GLR]{GLR} I. Gohberg, P. Lancaster, L. Rodman,  {\it Invariant Subspaces of Matrices with Applications}, Wiley-Interscience, New York, 1986.
\bibitem[GVL]{GVL} G.H. Golub, C.F. Van Loan,  {\it Matrix Computations}, John Hopkins University Press, Baltimore, MD, 1989.
\bibitem[H]{H} J. Harris, {\it Algebraic Geometry, A first course}, Graduate Texts in Mathematics, Springer, New York 1992.
\bibitem[HP]{HP} W.V.D. Hodge, P.D. Pedoe, {\it  Methods of Algebraic Geometry}, Cambridge University Press, Cambridge, MA, 1952.
\bibitem[M]{M} D. Mumford, {\it Algebraic Geometry I: Complex Projective Varieties}, Grundlehren der math. Wissenschaften  {\bf 221}, Springer New York, 1976.
\bibitem[T]{T} M. Tsatsomeros,  {\it A criterion for the existence of common invariant subspaces of matrices}, Linear Algebra and its Applications  {\bf 322}(1-3), 51-59, 2001.
\bibitem[Y]{Y}  B. S. Yadav, {\it The present state and heritages of the invariant subspace problem}, Milan Journal of Mathematics, {\bf 73}(1), 289-316, 2005. 







\end{thebibliography}
\end{document}